\title[zero-sets of fractional Sobolev spaces]{\protect{A note on zero sets of fractional sobolev functions with negative power of integrability}}
\author{Armin Schikorra}
\address{Armin Schikorra, Max-Planck Institut MiS Leipzig, Inselstr. 22, 04103 Leipzig, Germany, {\tt armin.schikorra@mis.mpg.de}}
\thanks{A.S. was supported by DAAD fellowship D/12/40670}
\def\eps{\varepsilon}
\newtheorem{theorem}{Theorem}
\newtheorem{lemma}[theorem]{Lemma}
\newtheorem{corollary}[theorem]{Corollary}
\theoremstyle{definition}
\newtheorem{definition}[theorem]{Definition}
\newcommand{\R}{\mathbb{R}}
\newcommand{\N}{\mathbb{N}}
\newcommand{\Z}{\mathbb{Z}}
\newcommand{\vrac}[1]{\| #1 \|}
\newcommand{\fracm}[1]{\frac{1}{#1}}
\newcommand{\brac}[1]{\left (#1 \right )}
\newcommand{\barint}{
\rule[.036in]{.12in}{.009in}\kern-.16in \displaystyle\int }
\newcommand{\barcal}{\mbox{$ \rule[.036in]{.11in}{.007in}\kern-.128in\int $}}
\def\bbbc{{\mathchoice {\setbox0=\hbox{$\displaystyle\rm C$}\hbox{\hbox
to0pt{\kern0.4\wd0\vrule height0.9\ht0\hss}\box0}}
{\setbox0=\hbox{$\textstyle\rm C$}\hbox{\hbox
to0pt{\kern0.4\wd0\vrule height0.9\ht0\hss}\box0}}
{\setbox0=\hbox{$\scriptstyle\rm C$}\hbox{\hbox
to0pt{\kern0.4\wd0\vrule height0.9\ht0\hss}\box0}}
{\setbox0=\hbox{$\scriptscriptstyle\rm C$}\hbox{\hbox
to0pt{\kern0.4\wd0\vrule height0.9\ht0\hss}\box0}}}}
\numberwithin{theorem}{section} \numberwithin{equation}{section}
\def\Xint#1{\mathchoice
{\XXint\displaystyle\textstyle{#1}}%
{\XXint\textstyle\scriptstyle{#1}}%
{\XXint\scriptstyle\scriptscriptstyle{#1}}%
{\XXint\scriptscriptstyle\scriptscriptstyle{#1}}%
\!\int}
\def\XXint#1#2#3{{\setbox0=\hbox{$#1{#2#3}{\int}$ }
\vcenter{\hbox{$#2#3$ }}\kern-.6\wd0}}
\DeclareMathOperator*{\mvint}{\Xint{--}}
\begin{document}
\sloppy

\subjclass[2000]{Primary 49Q15; Secondary 46E35}

%\sloppy

%\dsp
\begin{abstract}
We extend a Poincar\'{e}-type inequality for functions with large zero-sets by Jiang and Lin to fractional Sobolev spaces. As a consequence, we obtain a Hausdorff dimension estimate on the size of zero sets for fractional Sobolev functions whose inverse is integrable. Also, for a suboptimal Hausdorff dimension estimate, we give a completely elementary proof based on a pointwise Poincar\'{e}-style inequality.
\end{abstract}
\maketitle

\section{Introduction}
Let $\Omega \subset \R^n$ be an open set. For functions $u: \Omega \to \R^n$ we are interested in the size of the zero set $\Sigma$,
\[
 \Sigma := \{x \in \Omega: \quad \lim_{r \to 0} \mvint_{B_r(x)} |f| = 0\}, 
\]
under the condition that for some $\alpha > 0$,
\begin{equation}\label{eq:negativeintegral}
 \int_\Omega |f|^{-\alpha} < \infty.
\end{equation}
Here and henceforth, for a measurable set $A \subset \R^n$ we denote the mean value integral
\[
 \mvint_{A} f \equiv (f)_A := |A|^{-1} \int_A f.
\]
In \cite{JiangLin} Jiang and Lin showed that if $f \in W^{1,p}(\Omega)$, then
\[
 \mathcal{H}^s(\Sigma) = 0 \quad \mbox{where $s = \max \{0,n-\frac{p\alpha}{p+\alpha} \}$}.
\]
They were motivated by the analysis of rupture sets of thin films, which is described by a singular elliptic equation. We do not go into the details of this and instead, for applications we refer to, e.g., \cite{RuptureDavilaPonce,RuptureZongmingSongbo,MR2898777,MR2754304}.

In this note, we extend Jiang and Lin's result to fractional Sobolev spaces and obtain
\begin{theorem}\label{th:thm}
For $\sigma \in (0,1]$ and for any $f \in W^{\sigma,p}(\Omega)$ satisfying \eqref{eq:negativeintegral}, $\mathcal{H}^s(\Sigma) = 0$, where $s = \max \{0,n-\sigma\frac{p\alpha}{p+\alpha} \}$.
\end{theorem}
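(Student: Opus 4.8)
The plan is to follow the same strategy that works in the first-order case, replacing the classical Poincar\'e inequality by its fractional analogue. First I would establish the key Poincar\'e-type estimate: for a ball $B_r \subset \Omega$ and $f \in W^{\sigma,p}(B_r)$,
\[
\mvint_{B_r} |f - (f)_{B_r}|^{p} \;\lesssim\; r^{\sigma p}\, [f]_{W^{\sigma,p}(B_r)}^{p}\, |B_r|^{-1},
\]
where $[f]_{W^{\sigma,p}}$ denotes the Gagliardo seminorm; the correct scaling in $r$ is what drives the whole argument, and for $\sigma = 1$ this reduces to Jiang--Lin's input. A cleaner route, which I would actually prefer to carry out, is to use a \emph{fractional Poincar\'e--Morrey / potential} estimate: for a.e.\ $x \in B_r$ one has the pointwise bound $|f(x) - (f)_{B_r}| \lesssim I_\sigma^{B_r}(g)(x)$ for a suitable ``gradient-like'' function $g$ with $\|g\|_{L^p(B_r)} \lesssim [f]_{W^{\sigma,p}(B_r)}$, where $I_\sigma^{B_r}$ is the truncated Riesz potential of order $\sigma$; this is exactly the mechanism behind the ``completely elementary proof'' promised in the abstract for the suboptimal bound, and in the optimal argument it lets one run the Jiang--Lin bookkeeping verbatim.

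Next I would localize to a Vitali/Besicovitch cover. Fix a point $x_0 \in \Sigma$. Because $\lim_{r\to 0}(|f|)_{B_r(x_0)} = 0$, on small balls the mean $(|f|)_{B_r(x_0)}$ is tiny, so by the Poincar\'e estimate the set where $|f|$ is comparably small must occupy a definite fraction of $B_r(x_0)$. Quantitatively: on $B_r = B_r(x_0)$, split $B_r$ into the ``small set'' $E_r = \{|f| \le \lambda_r\}$ and its complement. Using \eqref{eq:negativeintegral} on one hand ($\int_{B_r}|f|^{-\alpha}$ is finite, hence small on small balls after subtracting, or at least controlled), and the Poincar\'e/potential inequality on the other, I would derive, via H\"older with the conjugate exponents that produce the exponent $\tfrac{p\alpha}{p+\alpha}$, a lower bound of the shape
\[
r^{\,n - \sigma\frac{p\alpha}{p+\alpha}} \;\lesssim\; \brac{\int_{B_r(x_0)} |f|^{-\alpha}}^{\!\theta} \brac{[f]_{W^{\sigma,p}(B_r(x_0))}^{p}}^{\!\eta}
\]
with $\theta,\eta$ the natural dual weights; the point is that the right-hand side is a localized, absolutely-continuous-in-$r$ quantity that tends to $0$ as $r \to 0$ at every point of $\Sigma$ (by the Lebesgue differentiation theorem applied to the two finite measures $|f|^{-\alpha}\,dx$ and the Gagliardo energy density). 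Then a standard covering argument — cover $\Sigma$ by such balls, use the Vitali $5r$-lemma, sum the above inequality raised to the appropriate power and use that the localized energies add up with finite overlap — yields $\mathcal H^{s}(\Sigma \cap K) = 0$ for every compact $K$, hence $\mathcal H^s(\Sigma) = 0$ with $s = \max\{0, n - \sigma\frac{p\alpha}{p+\alpha}\}$.

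The main obstacle I expect is the fractional Poincar\'e step done with the \emph{correct} localization. Unlike the gradient, the Gagliardo energy is nonlocal: $[f]_{W^{\sigma,p}(B_r)}^p = \int_{B_r}\int_{B_r} \frac{|f(x)-f(y)|^p}{|x-y|^{n+\sigma p}}\,dx\,dy$ only sees pairs inside $B_r$, but a genuinely useful pointwise potential bound $|f(x)-(f)_{B_r}(x)| \lesssim I_\sigma^{B_r}(g)(x)$ needs care to keep $g$ supported in (a fixed dilate of) $B_r$ and to keep $\|g\|_{L^p}$ controlled by the \emph{intrinsic} seminorm on that same ball — otherwise the finite-overlap summation in the covering step breaks. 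I would handle this by the standard chaining/dyadic-annuli argument adapted to $W^{\sigma,p}$ (writing $f(x) - (f)_{B_r} = \sum_k \big((f)_{B_{2^{-k}r}(x)} - (f)_{B_{2^{-k-1}r}(x)}\big)$ and estimating each telescoping term by a local Gagliardo double integral), which gives the pointwise Riesz-potential bound with constants depending only on $n,\sigma,p$ and with everything localized to $B_r$. The remaining bookkeeping — choice of $\lambda_r$, the H\"older exponents, and the covering sum — is then routine and parallels \cite{JiangLin}.
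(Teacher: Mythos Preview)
Your covering approach is essentially the strategy of Section~\ref{s:easyargs}, and it can only reach the \emph{suboptimal} conclusion $\mathcal{H}^t(\Sigma)=0$ for every $t>s$, not the sharp $\mathcal{H}^s(\Sigma)=0$. The gap is in the step where you pass from ``$(|f|)_{B_r(x_0)}\to 0$'' to the scale-correct inequality $r^{s}\lesssim (\int_{B_r}|f|^{-\alpha})^{\theta}([f]^p_{W^{\sigma,p}(B_r)})^{\eta}$ at a given $x_0\in\Sigma$. After the H\"older step, what one actually needs is $\int_{B_r}|f|^p \lesssim r^{\sigma p}[f]^p_{W^{\sigma,p}(B_r)}$, i.e.\ a Poincar\'e inequality \emph{without} subtracting the mean; this requires a quantitatively large zero set inside $B_r$, not a single Lebesgue-zero point. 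Knowing only that the mean tends to $0$ gives no control of $(|f|)_{B_r}$ against $r^\sigma$ at any fixed scale. The paper's Lemma~\ref{la:pointwisepoinc} makes this precise: for each $x_0\in\Sigma$ one can only find \emph{some} $\rho<R$ with $\int_{B_\rho}|f|^p \le C(R/\rho)^\eps \int_{B_\rho}|f-(f)_{B_\rho}|^p$, and the unavoidable factor $(R/\rho)^\eps$ is exactly what degrades the Hausdorff exponent from $s$ to any $t>s$ in the subsequent covering sum (Corollary~\ref{co:subopt}).

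There is a second gap as well: even granting your sharp localized estimate (say in the additive form $r^s\lesssim [f]^p_{W^{\sigma,p}(B_r)}+\int_{B_r}|f|^{-\alpha}$ via Young), a Vitali/Besicovitch sum only yields $\mathcal{H}^s(\Sigma)\lesssim [f]^p_{W^{\sigma,p}(\Omega)}+\int_\Omega|f|^{-\alpha}<\infty$, not $=0$; that the localized right-hand side tends to $0$ at each point does nothing to make the global sum small. The paper obtains the sharp exponent by a different mechanism altogether: one assumes $\mathcal{H}^s(\Sigma)>0$ and invokes the (explicitly non-elementary) theory of Sousslin sets to extract a \emph{closed} $T\subset\Sigma$ satisfying simultaneously the lower bound \eqref{eq:largeness} and the upper regularity \eqref{eq:straightness}. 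Only with such a $T$ in hand does the genuine Poincar\'e inequality $\|f\|_{L^p(B_R)}\le CR^\sigma [f]_{W^{\sigma,p}(B_R)}$ of Theorem~\ref{th:poincare} become available---proved there by a blow-up/compactness argument, not via potentials---after which the Jiang--Lin bookkeeping runs and produces the contradiction. Your outline contains neither this structure-theoretic extraction of $T$ nor any alternative device to pass from ``finite'' to ``zero'' at the critical exponent.
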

Here, we use the following definitions for the (fractional) Sobolev space. For more on these we refer to, e.g., \cite{Hitchhiker,Adams03,SKM93}.
\begin{definition}
The homogeneous $W^{\sigma,p}$-norms are defined as follows:
\[
 [f]_{\dot{W}^{1,p}(\Omega)} := \vrac{\nabla f}_{L^p(\Omega)}.
\]
For $\sigma \in (0,1)$ we define the Slobodeckij-norm,
\[
 [f]_{\dot{W}^{\sigma,p}(\Omega)} := \begin{cases} 
\brac{\int\limits_\Omega \int\limits_\Omega \brac{\frac{|f(x)-f(y)|}{|x-y|^\sigma}}^p \frac{dx\ dy}{|x-y|^{n}}}^{\fracm{p}} \quad &\mbox{if } p \in [1,\infty),\\
                                                                                                      \\
                                                                                                      \sup\limits_{x \neq y} \frac{|f(x)-f(y)|}{|x-y|^\sigma} \quad &\mbox{if } p = \infty.
                                                                                                     \end{cases}
\]
The respective Sobolev space $W^{\sigma,p}$, $\sigma \in (0,1]$, $p \in [1,\infty]$ is then the collection of functions $f: \Omega \to \R$ with finite Sobolev norms $\vrac{f}_{W^{\alpha,p}(\Omega)}$,
\[
 \vrac{f}_{W^{\alpha,p}(\Omega)} := \vrac{f}_{L^p(\Omega)} + [f]_{\dot{W}^{\alpha,p}(\Omega)}.
\]
\end{definition}

To prove Theorem~\ref{th:thm}, the case $p \leq n/\sigma$ is the relevant one, since for the other cases we can use the embedding into the H\"older spaces, see \cite{JiangLin}. We have the following extension to fractional Sobolev spaces of a Poincar\'e-type inequality from \cite{JiangLin}.
\begin{theorem}\label{th:poincare}
For any $\theta > 0$, $\sigma \in (0,1]$, $p \in (1,n/\sigma]$, $s \in (n-\sigma p,n]$, there is a constant $C > 0$ such that the following holds for any $R > 0$:

Let $B_R$ be any ball in $\R^n$ with radius $R$, $f \in W^{\sigma,p}(B_R)$ and assume that there is a \emph{closed} set $T \subset B_R$ such that
\[
 T \subset \{x \in B_R: \quad \limsup_{r \to 0} \mvint_{B_r} |f| = 0 \},
\]
\begin{equation}\label{eq:largeness}
 \mathcal{H}^s(T) > \frac{1}{\theta}\ R^s,
\end{equation}
and for any ball $B_r$ with some radius $r > 0$,
\begin{equation}\label{eq:straightness}
 \mathcal{H}^s(T \cap B_r) \leq \theta r^s.
\end{equation}
Then,
\[
 \vrac{f}_{L^p(B_R)} \leq C\ R^\sigma\ [f]_{\dot{W}^{\sigma,p}(B_R)}.
\]
\end{theorem}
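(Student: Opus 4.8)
The plan is to reduce the $L^p$ bound to a pointwise Poincaré-style estimate at a well-chosen point of $T$ and then integrate. First I would recall the basic fractional Poincaré inequality on a ball: for any ball $B_r \subset B_R$ and any $f \in \dot W^{\sigma,p}(B_r)$,
\[
 \mvint_{B_r} |f - (f)_{B_r}|^p \leq C\, r^{\sigma p}\, r^{-n} [f]_{\dot W^{\sigma,p}(B_r)}^p ,
\]
which follows directly from the Slobodeckij seminorm by Jensen's inequality. Chaining this over a dyadic sequence of balls $B_{2^{-k}r}(x)$ shrinking to a point $x \in T$ — at which $(f)_{B_{2^{-k}r}(x)} \to 0$ by the hypothesis on $T$ — gives the pointwise bound
\[
 |(f)_{B_r(x)}|^p \leq C \sum_{k \geq 0} (2^{-k}r)^{\sigma p - n} [f]_{\dot W^{\sigma,p}(B_{2^{-k}r}(x))}^p ,
\]
and since $\sigma p - n < 0$ is \emph{false} in general (we only have $p \leq n/\sigma$, so $\sigma p - n \leq 0$), one must instead keep the scale-dependence honest and track it through a maximal-function-type quantity; this is exactly where the geometric hypotheses \eqref{eq:largeness}–\eqref{eq:straightness} on $T$ must enter.

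The key idea, following Jiang–Lin, is the following. Write, for $x \in B_R$,
\[
 (f)_{B_R} = \big((f)_{B_R} - (f)_{B_r(x)}\big) + (f)_{B_r(x)},
\]
telescope the first difference along dyadic radii from $R$ down to $r$, let $r \to 0$ along points of $T$, and obtain that for every $x \in T$,
\[
 |(f)_{B_R}| \leq C \sum_{j} (2^{-j}R)^{\sigma}\, \mvint_{B_{2^{-j}R}(x)} \text{(fractional difference quotient)} .
\]
Integrating this inequality in $x$ over $T$ with respect to $\mathcal H^s \llcorner T$, using \eqref{eq:largeness} to divide by $\mathcal H^s(T) \gtrsim \theta^{-1} R^s$ on the left, and \eqref{eq:straightness} (the Ahlfors-upper-regularity $\mathcal H^s(T\cap B_r)\leq \theta r^s$) to control the Fubini-swapped integral $\int_T \mvint_{B_\rho(x)}(\cdots)\, d\mathcal H^s(x)$ on the right, converts the sum over scales into a convergent geometric series precisely because $s > n - \sigma p$, i.e. $\sigma p > n - s$. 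This yields
\[
 |(f)_{B_R}| \leq C R^{\sigma - (n-s)/p} \Big(\int_{B_R}\int_{B_R} \tfrac{|f(x)-f(y)|^p}{|x-y|^{\sigma p + n}}\,dx\,dy\Big)^{1/p} \cdot \big(\text{normalization}\big),
\]
with the powers of $R$ balancing out after one more application of the plain Poincaré inequality to pass from $(f)_{B_R}$ to $\|f\|_{L^p(B_R)}$.

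More precisely, the final assembly is: $\|f\|_{L^p(B_R)} \leq \|f - (f)_{B_R}\|_{L^p(B_R)} + |B_R|^{1/p}|(f)_{B_R}| \leq C R^\sigma [f]_{\dot W^{\sigma,p}(B_R)} + C R^{n/p}|(f)_{B_R}|$, and the second term is handled by the averaged pointwise estimate above, whose right-hand side is again $\lesssim R^\sigma [f]_{\dot W^{\sigma,p}(B_R)}$ after the series converges. I expect the main obstacle to be the bookkeeping in the Fubini step: one must estimate $\int_{T\cap B_R}\big(\int_{B_{2^{-j}R}(x)} |f(x)-f(z)|^p |x-z|^{-\sigma p - n}\,dz\big)\,d\mathcal H^s(x)$ and show, using \eqref{eq:straightness} to bound $\mathcal H^s(T\cap B_{|x-z|}(z))$, that swapping the order of integration produces the extra factor $|x-z|^{s-n}$ needed to convert $dz$ integration against $|x-z|^{-\sigma p - n}$ into the full double Slobodeckij integral, with the scale sum $\sum_j (2^{-j}R)^{\sigma p - (n - s)}$ converging by the hypothesis $s > n - \sigma p$. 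Getting the exponents to match — and checking that the constant $C$ depends only on $\theta, \sigma, p, s, n$ and not on $R$ (by scaling) or on $f$ — is the delicate part; the case $p=1$ is excluded since the dyadic chaining of $(f)_{B_r(x)}$ uses $p>1$ only through Jensen with room to spare, but in fact the argument as organized needs $p > 1$ nowhere essential beyond the stated hypothesis, so I would simply carry the exponent $p \in (1,n/\sigma]$ throughout.
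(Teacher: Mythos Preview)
Your strategy is sound and would work, but it is \emph{not} the route the paper takes. The paper argues by contradiction via the classical blow-up scheme: after scaling to $R=1$, a violating sequence $f_k$ is normalised to $\|f_k\|_{L^p}=1$ with $[f_k]_{\dot W^{\sigma,p}}\to 0$; Rellich--Kondrachov forces $f_k\to \mathrm{const}$ in $L^p$, and a separate lemma then shows that for any sequence $g_k\to 1$ in $W^{\sigma,p}(B_1)$ with closed zero-sets $T_k$ obeying \eqref{eq:straightness} one must have $\mathcal H^s(T_k)\to 0$, contradicting \eqref{eq:largeness}. That lemma is proved by a Vitali covering: for each $x\in T_k$ one locates a dyadic radius $\rho$ at which $\rho^s \lesssim [g_k]_{\dot W^{\sigma,p}(B_\rho(x))}^p$, and summing over a disjoint subfamily gives $\mathcal H^s(T_k)\lesssim [g_k]_{\dot W^{\sigma,p}}^p\to 0$.

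Your argument is instead direct and quantitative: telescope $(f)_{B_R}$ at each $x\in T$, raise to the $p$-th power, integrate in $d\mathcal H^s\llcorner T$, and use \eqref{eq:straightness} in the Fubini step and \eqref{eq:largeness} to divide out. This yields an explicit constant (roughly $C\theta^{2/p}$ times a series constant), whereas the paper's compactness proof gives none. Two small cautions. First, when you pass from the telescoped sum to its $p$-th power you need a weighted H\"older step that costs an arbitrarily small exponent $\delta>0$ per scale; the series $\sum_j 2^{-j(\sigma p+s-n-\delta(p-1))}$ then converges precisely because $s>n-\sigma p$ allows room for such a $\delta$. Your write-up hints at this (``keep the scale-dependence honest'') but the displayed pointwise bound with a bare $\sum_k (2^{-k}r)^{\sigma p-n}[f]^p$ is not literally what one gets. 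Second, the balls $B_{2^{-j}R}(x)$ for $x\in T\subset B_R$ spill outside $B_R$, so the Fubini step really controls $[f]_{\dot W^{\sigma,p}(B_{2R})}$; you will need either an extension (as the paper itself invokes in its lemma) or a reduction to $T\cap B_{R/2}$, which is harmless since \eqref{eq:largeness}--\eqref{eq:straightness} survive up to constants on a sub-ball. With those two points made precise, your argument is a legitimate and arguably more informative alternative to the paper's compactness proof; the attribution ``following Jiang--Lin'' is slightly off, since their $W^{1,p}$ argument goes through $p$-harmonic replacements and Wolff potentials rather than the elementary integration over $T$ that you outline.
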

In \cite{JiangLin} this was proven for the classical Sobolev space $W^{1,p}$, using an argument based on the $p$-Laplace equation with measures and the Wolff potential. Our argument, on the other hand, is completely elementary and adapts the classical blow-up proof of the Poincar\'e inequality, see Section~\ref{s:poincproof}.

Once Theorem~\ref{th:poincare} is established, one can follow the arguments in \cite{JiangLin} to obtain Theorem~\ref{th:thm}. These rely heavily on the theory of Sousslin sets, \cite{Rogers}, to find the closed set $T \subset \Sigma$ with the condition \eqref{eq:largeness} and \eqref{eq:straightness} satisfied. Those arguments are by no means elementary, but we were unable to remove them in order to show that $\mathcal{H}^s(\Sigma) = 0$. However, if one is satisfied in showing that $\mathcal{H}^{t}(\Sigma) = 0$ for any $t > s$, then there is a completely elementary argument, the details of which we will present in Section~\ref{s:easyargs}. There, we prove the following ``pointwise'' Poincar\'e-style inequality, from which the suboptimal Hausdorff dimension estimate easily follows, see Corollary~\ref{co:subopt}.
\begin{lemma}\label{la:pointwisepoinc}
For any $\eps > 0$, $p \in [1,\infty)$, there exists $C > 0$, such that the following holds. Let $f \in L^p_{loc}$, and assume $x \in \R^n$, such that
\begin{equation}\label{eq:limtozero}
 \lim_{r \to 0} \mvint_{B_r(x)} |f| = 0
\end{equation}
then for any $R > 0$, there exists $\rho \in (0,R)$ such that
\[
 \int\limits_{B_\rho(x)} |f|^p \leq C\ \brac{\frac{R}{\rho}}^{\eps} \int\limits_{B_\rho(x)} ||f|-(|f|)_{B_\rho}|^p.  
\]
\end{lemma}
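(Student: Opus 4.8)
The plan is to argue by contradiction via an iteration on dyadic annuli. Suppose the conclusion fails: then there is $\eps>0$, $p\in[1,\infty)$, a function $f$, a point $x$ satisfying \eqref{eq:limtozero}, and some $R>0$ such that for \emph{every} $\rho\in(0,R)$ one has
\[
 \int\limits_{B_\rho(x)} |f|^p > C\ \brac{\frac{R}{\rho}}^{\eps} \int\limits_{B_\rho(x)} ||f|-(|f|)_{B_\rho}|^p,
\]
where $C$ is a large constant to be chosen at the end. Writing $g:=|f|$ and $a_k := (g)_{B_{2^{-k}R}(x)}$ for $k\geq 0$, the failure of the inequality at each scale $\rho_k := 2^{-k}R$ says that the oscillation-type quantity $\int_{B_{\rho_k}} |g-a_k|^p$ is small compared to $\int_{B_{\rho_k}} g^p$, with a gain of the factor $2^{-k\eps}$. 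The idea is that this forces $g$ to be essentially constant (equal to $a_k$) on $B_{\rho_k}$ at each scale, with errors that are summable, so the averages $a_k$ cannot decay to $0$; but \eqref{eq:limtozero} says $a_k\to 0$, a contradiction.

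The key steps, in order: (1) First I would use the triangle inequality in $L^p$ to compare consecutive averages. On the annulus $B_{\rho_k}\setminus B_{\rho_{k+1}}$ we have $|B_{\rho_{k+1}}| = 2^{-n}|B_{\rho_k}|$, so $|a_k - a_{k+1}| \leq |B_{\rho_{k+1}}|^{-1/p}\big(\|g-a_k\|_{L^p(B_{\rho_{k+1}})} + \|g-a_{k+1}\|_{L^p(B_{\rho_{k+1}})}\big) \lesssim \rho_k^{-n/p}\big(\|g-a_k\|_{L^p(B_{\rho_k})} + \|g-a_{k+1}\|_{L^p(B_{\rho_{k+1}})}\big)$. (2) Next, denote $E_k := \rho_k^{-n/p}\|g-a_k\|_{L^p(B_{\rho_k})}$ and $M_k := \rho_k^{-n/p}\|g\|_{L^p(B_{\rho_k})}$; note $|a_k|\leq M_k$ by Jensen, and the assumed failure at scale $\rho_k$ reads $M_k^p > C\, 2^{k\eps} E_k^p$, i.e. $E_k \leq C^{-1/p} 2^{-k\eps/p} M_k$. (3) From step (1), $M_{k+1} \leq |a_{k+1}| + E_{k+1} \leq |a_k| + |a_k-a_{k+1}| + E_{k+1} \lesssim M_k + (E_k + E_{k+1})$, and combining with step (2), $M_{k+1} \leq (1 + C^{-1/p}\kappa\, 2^{-k\eps/p}) M_k$ for an absolute constant $\kappa$ (using $M_{k+1}\le$ const$\cdot M_k$ to absorb the $E_{k+1}$ term, or iterating the two-sided bound). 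Since $\sum_k 2^{-k\eps/p} < \infty$, the product $\prod_k (1 + C^{-1/p}\kappa\, 2^{-k\eps/p})$ converges, so $M_k \leq A\, M_0$ for all $k$, with $A$ bounded; moreover $M_k$ is bounded \emph{below} by a comparable constant times $M_0$ for the same reason. (4) Finally, $|a_k| \geq M_k - E_k \geq M_k - C^{-1/p} 2^{-k\eps/p} M_k \geq \tfrac12 M_k$ once $k$ is large, and by step (3) $M_k \gtrsim M_0$. Hence $\liminf_{k}|a_k| \gtrsim M_0 > 0$ (the case $M_0 = 0$ being trivial since then $f\equiv 0$ a.e.\ on $B_R$ and the inequality is vacuous), contradicting $a_k = (g)_{B_{\rho_k}(x)} \to 0$ from \eqref{eq:limtozero}. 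Choosing $C$ large enough at the outset makes all the "const" factors harmless.

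The main obstacle I expect is the bookkeeping in step (3): one must be careful that $M_{k+1}$ is controlled by $M_k$ with a multiplicative factor of the form $1 + (\text{small summable})$, rather than something like $1 + (\text{absolute constant})$, since only the former gives a convergent product and a uniform two-sided bound on $M_k$. This is exactly where the gain $(R/\rho)^\eps = 2^{k\eps}$ in the hypothesis is used — it converts the crude oscillation bound into a summable perturbation. One has to set up the recursion so that the $E_{k+1}$ term appearing on the right in step (3) is also absorbed (e.g.\ by first establishing the crude bound $M_{k+1}\le c_n M_k$ with a dimensional constant, then feeding it back), which is routine but needs a clean ordering of the estimates. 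Everything else is elementary: triangle inequalities, Jensen's inequality, and convergence of a geometric-type series.
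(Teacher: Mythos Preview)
Your argument is correct and takes a genuinely different route from the paper's. Both proofs argue by contradiction at the dyadic scales $\rho_k=2^{-k}R$, but the paper first treats $p=1$: it expands $\mvint_{B_\rho} f$ as a telescoping sum of differences of consecutive averages, bounds each difference by an oscillation term, sums the resulting inequalities over all dyadic levels, and by an absorption argument concludes that every oscillation $\mvint_{B_{\rho_k}}|f-(f)_{B_{\rho_k}}|$ vanishes, so $f$ is constant (hence zero by \eqref{eq:limtozero}) on $B_R$; the case $p>1$ is then deduced by applying the $p=1$ result to $|f|^p$ together with the elementary inequality $||a-b|^p-|a|^p-|b|^p|\leq \delta|a|^p+C_\delta |b|^p$. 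You instead work directly with general $p$, tracking the normalized $L^p$-averages $M_k$ and using the failure assumption $E_k\leq C^{-1/p}2^{-k\eps/p}M_k$ to establish the two-sided multiplicative recursion $M_{k+1}=(1+O(C^{-1/p}2^{-k\eps/p}))M_k$, whose convergent product forces $M_k$, and hence the means $a_k$, to stay bounded away from zero. Your approach is more streamlined---no reduction to $p=1$ and no auxiliary algebraic inequality---while the paper's summation/absorption scheme yields the slightly stronger conclusion that under the failure hypothesis $f$ must actually vanish on $B_R$; both are entirely elementary, and the bookkeeping concern you flag in step~(3) is indeed handled by the crude bound $M_{k+1}\leq 2^{n/p}M_k$ fed back into $E_{k+1}\leq C^{-1/p}2^{-(k+1)\eps/p}M_{k+1}$.
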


\subsection*{Acknowledgments}
The author thanks P. Haj\l{}asz for introducing him to Jiang and Lin's paper \cite{JiangLin}.

% \section{definitions}\label{s:defs}
% As usual,
% \[
%  [f]_{\dot{W}^{1,p}(\Omega)} := \vrac{\nabla f}_{L^p(\Omega)}.
% \]
% For $\sigma \in (0,1)$ we recall the Slobodeckij-norm:
% \[
%  [f]_{\dot{W}^{\sigma,p}(\Omega)} := \left \|\frac{f(x)-f(y)}{|x-y|}\right \|_{p,\frac{dx\ dy}{|x-y|^n}} \equiv \begin{cases} 
% \brac{\int\limits_\Omega \int\limits_\Omega \brac{\frac{|f(x)-f(y)|}{|x-y|^\sigma}}^p \frac{dx\ dy}{|x-y|^{n}}}^{\fracm{p}} \quad &\mbox{if } p \in [1,\infty)\\
%                                                                                                       \\
%                                                                                                       [f]_{C^{0,\alpha}} \quad &\mbox{if } p = \infty
%                                                                                                      \end{cases}
% \]
% The respective non-homogeneous spaces $W^{1,p}$, $W^{\sigma,p}$ are then given by function with finite norms:
% \[
%  \vrac{f}_{W^{1,p}(\Omega)} := \vrac{f}_{L^p(\Omega)} + [f]_{\dot{W}^{1,p}(\Omega)}
% \]
% \[
%  \vrac{f}_{W^{\alpha,p}(\Omega)} := \vrac{f}_{L^p(\Omega)} + [f]_{\dot{W}^{\alpha,p}(\Omega)}.
% \]
% 
% Note that we have the following embeddinge
% \begin{proposition}[Classical Poincar\'{e} inequality]
% For any nicely bounded $\Omega \subset \R^n$, $\alpha \in (0,1]$ there exists a constant $C_\Omega$ such that (\ToDo $\lambda$, $\alpha$ relation!
% \[
%  [f]_{\dot{\mathcal{L}}^{n-\alpha,p}(\Omega)} \leq C_\Omega\ [f]_{\dot{W}^{\alpha,p}(\Omega)}
% \]
% \end{proposition}

\section{Poincar\'e Inequality: Proof of Theorem~\ref{th:poincare}}\label{s:poincproof}
By a scaling argument, Theorem~\ref{th:poincare} follows from the following
\begin{lemma}
For any $\theta > 0$, $\sigma \in (0,1]$, $p \in (1,n/\sigma]$, $s \in (n-\sigma p,n]$, there is a constant $C > 0$ such that the following holds:

Let $f \in W^{\sigma,p}(B_1,[0,\infty))$ and assume that there is a \emph{closed} set $T \subset B_1$ such that
\[
 T \subset \{x \in B_1: \quad \limsup_{r \to 0} \mvint_{B_r} f = 0 \},
\]
and
\[
 \mathcal{H}^s(T) > \frac{1}{\theta},
\]
as well as
\[
 \mathcal{H}^s(T \cap B_r) \leq \theta r^s \quad \mbox{for any ball $B_r$ with radius $r > 0$}.
\]
Then,
\[
 \vrac{f}_{L^p(B_1)} \leq C\ [f]_{\dot{W}^{\sigma,p}(B_1)}.
\]
\end{lemma}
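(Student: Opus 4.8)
The plan is to split the bound into the standard fractional Poincar\'e--Wirtinger inequality, which controls the oscillation of $f$ about its mean with no hypothesis on $T$, and a chaining estimate controlling the mean itself by means of the three properties of $T$. Setting $c_0:=(f)_{B_1}$, we have $\vrac{f}_{L^p(B_1)}\le\vrac{f-c_0}_{L^p(B_1)}+|c_0|\,|B_1|^{\fracm p}$, and $\vrac{f-c_0}_{L^p(B_1)}\le C[f]_{\dot W^{\sigma,p}(B_1)}$ holds for free: for $\sigma\in(0,1)$ by Jensen's inequality and $|x-y|\le 2$ on $B_1$, and for $\sigma=1$ by the classical Poincar\'e inequality on a ball. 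Thus it remains to prove
\[
 |c_0|\ \le\ C\,[f]_{\dot W^{\sigma,p}(B_1)},\qquad C=C(\theta,\sigma,p,s,n).
\]

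Fix $x\in T$. Since $f\ge 0$ and $\limsup_{r\to 0}\mvint_{B_r(x)}f=0$, in fact $(f)_{B_r(x)}\to 0$ as $r\to 0$, so telescoping along a chain $B_1=Q_0(x),Q_1(x),Q_2(x),\dots$ of balls joining $B_1$ to $x$ -- concentric balls at $x$ with geometrically decreasing radii once the scale drops below $d(x):=\dist(x,\partial B_1)$, and, at scales $2^{-m}\gtrsim d(x)$, balls of radius $\asymp 2^{-m}$ centred along $[0,x]$, consecutive ones overlapping with comparable volume, all inside $B_1$ -- gives $|c_0|\le\sum_{j\ge 0}|(f)_{Q_j(x)}-(f)_{Q_{j+1}(x)}|$. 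Each link at scale $\rho$ is controlled by a local oscillation: using $|(f)_{Q_j}-(f)_{Q_{j+1}}|\le\mvint_{\tilde Q_j}\mvint_{\tilde Q_j}|f(y)-f(z)|\,dy\,dz$ for a ball $\tilde Q_j\supset Q_j\cup Q_{j+1}$ of radius $\asymp\rho$, Jensen, and $|y-z|\le 2\rho$ on $\tilde Q_j$,
\[
 |(f)_{Q_j}-(f)_{Q_{j+1}}|\ \le\ C\,\rho^{\,\sigma-n/p}\Big(\,\int_{\tilde Q_j}\!\!\int_{\tilde Q_j}\tfrac{|f(y)-f(z)|^p}{|y-z|^{\sigma p+n}}\,dy\,dz\Big)^{\fracm p}
\]
(for $\sigma=1$ replace the Gagliardo energy by $\int_{\tilde Q_j}|\nabla f|^p$). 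Integrating the chaining inequality over $x\in T$ against $\mathcal H^s$ and dividing by $\mathcal H^s(T)>1/\theta$,
\[
 |c_0|\ \le\ \theta\int_T\sum_{j\ge 0}|(f)_{Q_j(x)}-(f)_{Q_{j+1}(x)}|\,d\mathcal H^s(x),
\]
and we group the $j$-sum by dyadic scale $2^{-m}$. For each $m$, H\"older in $x$ with exponents $p,p'$ together with $\mathcal H^s(T)\le\theta$ (the straightness hypothesis applied to $B_1$ itself) bounds the scale-$m$ term by $C\,2^{m(n/p-\sigma)}\theta^{1/p'}\big(\int_T\!\int_{\tilde Q(x,m)}\!\int_{\tilde Q(x,m)}\tfrac{|f(y)-f(z)|^p}{|y-z|^{\sigma p+n}}\,dy\,dz\,d\mathcal H^s(x)\big)^{1/p}$; by Fubini, and since $\{x\in T:\,y,z\in\tilde Q(x,m)\}\subset T\cap B_{C2^{-m}}(y)$ has $\mathcal H^s$-measure $\le C\theta 2^{-ms}$ again by straightness, the inner triple integral is at most $C\theta\,2^{-ms}[f]^p_{\dot W^{\sigma,p}(B_1)}$. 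Hence $|c_0|\le C(\theta,\sigma,p,s,n)\,[f]_{\dot W^{\sigma,p}(B_1)}\sum_{m\ge 0}2^{m(n-s-\sigma p)/p}$, and the series converges precisely because $s>n-\sigma p$, which proves the lemma.

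Equivalently one may phrase this as the classical blow-up argument: normalise $\vrac{f_k}_{L^p(B_1)}=1$ with $[f_k]_{\dot W^{\sigma,p}(B_1)}\to 0$, extract (Rellich) $f_k\to c$ in $L^p(B_1)$ with $c>0$ and (Blaschke plus weak-$*$ compactness) a limiting nonzero measure $\mu$ carried by the Hausdorff limit of the $T_k$ with $\mu(B_r(x))\le\theta r^s$, and derive a contradiction from the same chaining bound applied to $(f_k)_{B_1}\to c$. I expect the one genuinely delicate point, in either formulation, to be the construction of the chain and the bounded-multiplicity bookkeeping near $\partial B_1$: the number of large-scale links for $x\in T$ is $\asymp\log(1/d(x))$ and can be unbounded, so the chain must be arranged so that at each fixed dyadic scale $2^{-m}$ the link-centres $\{c_m(x):x\in T,\ d(x)\lesssim 2^{-m}\}$ inherit the same upper $s$-regularity as $T$ (this holds, e.g., because $x\mapsto c_m(x)$ is bi-Lipschitz with constants $\asymp 1$ on that set) -- this is exactly what makes the Fubini step go through with a constant depending only on $\theta,\sigma,p,s,n$. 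The remaining ingredients -- the per-link oscillation bound, Poincar\'e--Wirtinger, and the H\"older/Fubini manipulations -- are routine.
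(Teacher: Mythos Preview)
Your argument is correct and genuinely different from the paper's. The paper argues by blow-up: normalising $\|f_k\|_{L^p}=1$, $[f_k]_{\dot W^{\sigma,p}}\to 0$, Rellich gives $f_k\to 1$ in $W^{\sigma,p}$, and then a separate lemma forces $\mathcal H^s(T_k)\to 0$, contradicting $\mathcal H^s(T_k)>1/\theta$. In that lemma the telescoping is used \emph{pointwise} in $x\in T_k$ via a pigeonhole step: rather than summing all scales, one selects for each $x$ a single scale $\rho=\rho(x)$ at which $\rho^s\lesssim [f_k]_{\dot W^{\sigma,p}(B_\rho(x))}^p$, covers $T_k$ by these balls, disjointifies with Vitali, and uses the straightness bound once to sum $\mathcal H^s(T_k)\le C\sum_\rho \rho^s\le C[f_k]^p\to 0$. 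Your route is the direct, constructive counterpart: you keep the full chain for every $x$, average over $T$, and use straightness at \emph{every} dyadic scale through the Fubini step, summing the resulting geometric series in $m$. This yields an explicit constant and avoids compactness entirely. Conversely, the paper's compactness packaging buys something you correctly identify as the one delicate point in your version: after extending $f_k-1$ to $W^{\sigma,p}(\R^n)$ and arranging $R_k\to 0$ so that all averages on balls of radius $>R_k$ are already close to $1$, the only balls that enter are the concentric $B_{2^{-l}R_k}(x)$, and no Whitney chain reaching toward $\partial B_1$ is needed—so the bi-Lipschitz bookkeeping for $x\mapsto c_m(x)$ near the boundary never arises. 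Both arguments use exactly the same two ingredients (telescoping of averages plus straightness), but the paper trades the boundary-chain technicality for a soft compactness step and a pigeonhole selection of one good scale.
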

\begin{proof}
We proceed by the usual blow-up proof of the Poincar\'e inequality: Assume the claim is false, and that for fixed $\theta, p, s, \sigma$ for any $k \in \N$ there are $f_k \in W^{\sigma,p}(B_1,[0,\infty))$ such that
\[
 T_k \subset \{x \in B_1: \quad \limsup_{r \to 0} \mvint_{B_r} f_k = 0 \},\]
\[
 \mathcal{H}^s(T_k) > \frac{1}{\theta},\quad \mathcal{H}^s(T_k \cap B_r) \leq \theta r^s\ \forall B_r,
\]
and
\[
 \vrac{f_k}_{L^p(B_1)} > k\ [f_k]_{\dot{W}^{\sigma,p}(B_1)}.
\]
Replacing $f_k$ by $\frac{f_k}{\vrac{f_k}_p}$ (note that this does not change the definition and size of $T_k$), we can assume w.l.o.g.
\[
 \vrac{f_k}_{L^p} \equiv 1,
\]
and
\[
 [f_k]_{\dot{W}^{\sigma,p}(B_1)} \xrightarrow{k \to \infty} 0.
\]
In particular, $f_k$ is uniformly bounded in $W^{\sigma,p}$, and by the Rellich-Kondrachov theorem, up to taking a subsequence, $f_k$ converges strongly in $L^p$, and weakly in $W^{\sigma,p}$ to some $f \in W^{\sigma,p}$, with $ [f]_{\dot{W}^{\sigma,p}(B_1)} \equiv 0$, $\vrac{f}_{L^p} = 1$. Thus,
\[
 f \equiv |B_1|^{-\frac{1}{p}},
\]
and setting $g_k := |B_1|^{\frac{1}{p}} f_k$, we have found a sequence such that
\[
 g_k \to 1 \quad \mbox{in $W^{\sigma,p}(B_1)$},
\]
\[
 \mathcal{H}^s(T_k) > \frac{1}{\theta}, 
\]
and
\[
\mathcal{H}^s(T_k \cap B_r) \leq \theta r^s \quad \mbox{for any ball $B_r$}.
\]
This is a contradiction to Lemma~\ref{la:noconvergence}.
\end{proof}

We used the following lemma, which essentially quantifies the intuition, that a function approximating $1$ in $W^{\sigma,p}$ cannot be zero on a large set.
\begin{lemma}\label{la:noconvergence}
Let $\sigma \in (0,1]$, $s \in (n-\sigma p,n]$, $f_k \in W^{\sigma,p}(B_1,[0,\infty))$, and assume that
\[
 \vrac{f_k - 1}_{W^{\sigma,p}(B_1)} \xrightarrow{k \to \infty} 0.
\]
Then, for any $T_k \subset B_1$ closed and
\[
 T_k \subset \{x \in B_1: \quad \limsup_{r \to 0} \mvint_{B_r} f_k = 0 \},
\]
as well as for some $\theta > 0$,
\begin{equation}\label{eq:Hsupperbound}
 \mathcal{H}^s(T_k \cap B_r) \leq \theta r^s \quad \mbox{for any $B_r$, for all $k$}
\end{equation}
we have
\[
 \lim_{k \to \infty} \mathcal{H}^s(T_k) =0.
\]
% 
% Equivalently, if for some $\theta > 0$
% \[
%  \limsup_{k \to \infty} \mathcal{H}^s(T_k) > 0,
% \]
% \[
%  \limsup_{k \to \infty} \sup_{B_r \subset \R^n} r^{-s} \mathcal{H}^s(T_k \cap B_r)  < \infty
% \]
% then
% \[
%  \limsup_{k \to \infty}  \vrac{f_k - 1}_{W^{\sigma,p}(B_1)} > 0.
% \]
\end{lemma}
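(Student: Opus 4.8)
The plan is to argue by contradiction: suppose $\mathcal{H}^s(T_k) \not\to 0$, so after passing to a subsequence there is $\delta > 0$ with $\mathcal{H}^s(T_k) > \delta$ for all $k$. The key quantitative tool is that membership in $T_k$ forces $f_k$ to have ``small average'' near every point of $T_k$, while $\vrac{f_k - 1}_{W^{\sigma,p}} \to 0$ forces $f_k$ to have average close to $1$ on a ``most'' balls. The tension between these is measured by a fractional maximal/potential estimate. Concretely, for $x \in T_k$ we have $\limsup_{r\to 0}\mvint_{B_r(x)} f_k = 0$, hence for $r$ small,
\[
1 - \mvint_{B_r(x)} f_k \geq \tfrac12 \quad\Longrightarrow\quad \mvint_{B_r(x)} |f_k - 1| \geq \tfrac12 \geq \mvint_{B_r(x)} |1 - (f_k)_{B_r(x)}| \text{ up to constants},
\]
so in particular the sharp fractional maximal function $M^\#_\sigma (f_k-1)(x) := \sup_{r>0} r^{-\sigma}\mvint_{B_r(x)}|f_k - (f_k)_{B_r(x)}|$ blows up like $r^{-\sigma}$ as $r \to 0$ at every $x \in T_k$. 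Thus $T_k \subset \{M^\#_\sigma(f_k - 1) = \infty\}$, or more usefully, for each $\lambda > 0$, $T_k$ is contained in the super-level set $\{M^\#_\sigma(f_k-1) > \lambda\}$.

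Next I would invoke a weak-type bound for the sharp fractional maximal operator adapted to Hausdorff content: since $g_k := f_k - 1$ satisfies $\mvint_{B_r(x)}|g_k - (g_k)_{B_r}| \le C r^{\sigma - n/p}[g_k]_{\dot W^{\sigma,p}(B_r)}$ by the fractional Poincaré inequality on $B_r$ (Hölder plus the definition of the Slobodeckij seminorm), one gets a pointwise bound $M^\#_\sigma g_k(x) \le C\, (M (|\nabla^\sigma g_k|^p)(x))^{1/p}$-type control; the cleanest route, staying within the paper's elementary framework, is a Vitali covering argument. For each $x \in T_k$ pick a radius $r_x$ with $r_x^{-\sigma}\mvint_{B_{r_x}(x)}|g_k - (g_k)_{B_{r_x}}| > \lambda$ (which exists, e.g. with $\lambda$ as large as we like by the blow-up above — but we only need a fixed $\lambda$). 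By the $5r$-covering lemma, extract a disjoint subfamily $\{B_{r_i}(x_i)\}$ with $T_k \subset \bigcup 5B_i$. Then using \eqref{eq:Hsupperbound} to bound $\mathcal{H}^s(T_k \cap 5B_i) \le \theta (5r_i)^s$ and the definition of $r_i$ together with the Poincaré-on-$B_{r_i}$ estimate,
\[
\mathcal{H}^s(T_k) \le \sum_i \theta (5 r_i)^s \le C \sum_i r_i^{s} \le C\lambda^{-p}\sum_i r_i^{s - \sigma p} \Big(\mvint_{B_{r_i}}|g_k-(g_k)_{B_{r_i}}|\Big)^p r_i^{\sigma p} \le C\lambda^{-p}\sum_i r_i^{s-n}[g_k]_{\dot W^{\sigma,p}(B_{r_i})}^p.
\]
Because $s > n - \sigma p$, the exponent $s - n > -\sigma p$; the point is that $r_i^{s-n}$ is a \emph{negative but subcritical} power, and one shows $\sum_i r_i^{s-n} [g_k]_{\dot W^{\sigma,p}(B_{r_i})}^p \lesssim [g_k]_{\dot W^{\sigma,p}(B_1)}^p$ up to a loss controlled by the fact that the balls are disjoint and $s \le n$. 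I would make this last summation rigorous either by a direct estimate $[g_k]_{\dot W^{\sigma,p}(B_{r_i})}^p \le \int\int_{B_{r_i}\times B_{r_i}}(\cdots) \le r_i^{?}(\cdots)$ reorganizing by the double integral, or — more robustly — by first passing from $B_1$ to a fixed dyadic decomposition. Either way the right-hand side tends to $0$ as $k \to \infty$ since $[g_k]_{\dot W^{\sigma,p}(B_1)} = [f_k-1]_{\dot W^{\sigma,p}(B_1)} \to 0$, giving $\mathcal{H}^s(T_k) \to 0$, a contradiction.

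The main obstacle is the final summation $\sum_i r_i^{s-n}[g_k]_{\dot W^{\sigma,p}(B_{r_i})}^p \lesssim [g_k]_{\dot W^{\sigma,p}(B_1)}^p$: the balls $B_{r_i}$ are disjoint but have wildly varying radii, and $r_i^{s-n}$ is an unbounded weight as $r_i \to 0$, so one cannot naively sum the local seminorms. The resolution uses that $M^\#_\sigma g_k(x)$ is in fact controlled by a \emph{fractional} sharp maximal function which, by the Hardy–Littlewood–Sobolev / Adams-type embedding, maps $\dot W^{\sigma,p}$ into a space whose superlevel sets have finite $\mathcal{H}^s$-content with the right scaling precisely when $s > n - \sigma p$ — this is the quantitative heart of the matter, and I expect the cleanest writeup actually replaces the hand Vitali argument with: (i) the pointwise inequality bounding the sharp fractional maximal function of $g_k$ by $(I_\sigma * |D^\sigma g_k|)$ or by $(M_{\sigma p}|g_k|^p_{\text{frac}})^{1/p}$, and (ii) the standard capacity/Hausdorff-content weak estimate for Riesz potentials, $\mathcal{H}^s_\infty(\{I_\sigma h > \lambda\}) \le C\lambda^{-p}\|h\|_{L^p}^p$ valid for $s = n - \sigma p$ and hence, by monotonicity of content, for all $s$ in the stated range with a radius-dependent constant absorbed into \eqref{eq:Hsupperbound}. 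Applying this with $h = h_k$ the ``gradient'' of $g_k$ in $L^p$, $\|h_k\|_{L^p} \simeq [g_k]_{\dot W^{\sigma,p}} \to 0$, yields $\mathcal{H}^s(T_k) \le \mathcal{H}^s(\{I_\sigma h_k > c\}) \to 0$. I would present the elementary Vitali version to stay in the spirit of the paper, and remark that \eqref{eq:Hsupperbound} is exactly what lets us pass from Hausdorff \emph{content} (which is what the potential estimate naturally controls) back to Hausdorff \emph{measure} $\mathcal{H}^s(T_k)$.
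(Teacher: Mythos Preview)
Your overall Vitali-covering strategy matches the paper's, and you correctly identify that the upper density bound \eqref{eq:Hsupperbound} is what converts a cover of $T_k$ by balls into a bound on $\mathcal{H}^s(T_k)$. But there is a genuine gap, and you have already put your finger on it: with your choice of $r_x$ (a scale at which $r_x^{-\sigma}\mvint_{B_{r_x}}|g_k-(g_k)_{B_{r_x}}|>\lambda$ for a \emph{fixed} $\lambda$), H\"older plus Poincar\'e only yield $r_i^s \le C\lambda^{-p} r_i^{s-n}[g_k]_{\dot W^{\sigma,p}(B_{r_i})}^p$, and the weight $r_i^{s-n}$ blows up as $r_i\to 0$. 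That sum genuinely cannot be controlled by $[g_k]_{\dot W^{\sigma,p}(B_1)}^p$ from disjointness alone, so the argument as written does not close. Your proposed escape via Riesz potentials and weak capacity estimates might be made to work, but it is much heavier than needed and you have not carried it out. (Incidentally, the assertion that $M^\#_\sigma(g_k)=\infty$ on $T_k$ is false in general---a $C^{0,\sigma}$ function vanishing at $x$ is a counterexample---and your displayed chain of inequalities does not give oscillation control; what is true, via telescoping, is the weaker bound $M^\#_\sigma(g_k)\ge c>0$ on $T_k$ for large $k$, which would suffice to build the cover.)

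The missing elementary idea, which the paper supplies, is to choose the scale $\rho_x$ by a pigeonhole on the dyadic telescope rather than by a fixed threshold. After arranging $\mvint_{B_{R_k}(x)}f_k \ge 9/10$ for some $R_k\to 0$, one has for $x\in T_k$ that $\sum_{l\ge 0}\mvint_{B_{2^{-l}R_k}(x)}|f_k-(f_k)_{B_{2^{-l}R_k}}|\gtrsim 1$; hence for any $\eps>0$ there must exist an $l$ with $\mvint_{B_{2^{-l}R_k}}|f_k-(f_k)_{B_{2^{-l}R_k}}|\ge c_\eps(2^{-l}R_k)^\eps$, since otherwise the sum is at most $c_\eps R_k^\eps\sum_l 2^{-\eps l}$, which is small. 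Taking $\eps=(s-(n-\sigma p))/p>0$ and $\rho=2^{-l}R_k$, H\"older plus Poincar\'e now give directly $\rho^s\le C[f_k]_{\dot W^{\sigma,p}(B_\rho)}^p$ with \emph{no} leftover power of $\rho$. The disjoint Vitali family then sums trivially: $\sum_i\rho_i^s\le C\sum_i[f_k]_{\dot W^{\sigma,p}(B_{\rho_i})}^p\le C[f_k]_{\dot W^{\sigma,p}(\R^n)}^p\to 0$. The whole point is that the scale-dependent threshold $c_\eps\rho^\eps$ (in place of your constant $\lambda$) absorbs exactly the factor $r_i^{s-n}$ that was obstructing your summation.
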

\begin{proof}
By the subsequence principle, it suffices to show 
\[
  \liminf_{k \to \infty} \mathcal{H}^s(T_k) =0.
\]
By extension, we also can assume that $f_k -1\to 0$ in $W^{\sigma,p}(\R^n)$, and $f_k \equiv 1$ on $\R^n \backslash B_2$.

On the one hand, we have
\[
  [f_k]_{\dot{W}^{\sigma,p}(\R^n)} \xrightarrow{k \to \infty} 0.
\]
On the other hand, up to picking a subsequence, we can assume the existence of $R_k \in (0,1)$, for $k \in \N$, and $\lim_{k \to \infty} R_k = 0$, such that
\[
 \inf_{r > R_k, x \in B_1} \mvint\limits_{B_{r}(x)} f_k \geq \frac{9}{10}.
\]
Since for any point $x \in T_k$ we have that $\lim_{t \to 0}\mvint_{B_r} f_k(x) = 0$, we expect the the average (fractional) gradient around $x$ to be fairly large. More precisely, we have the following

\subsection*{Claim}
There is a uniform constant $c_{s,\sigma,p} > 0$, such that the following holds: For any $x \in T_k$, there exists $\rho = \rho_{k,x} \in (0,R_k)$ such that 
\begin{equation}\label{eq:nablafpgeqcsrhos}
c_{s,\sigma,p}\ \rho^{s} \leq \rho^{-\sigma p} \int_{B_{\rho}} |f_k-(f_k)_{B_\rho}|^p \leq C\ [f_k]_{\dot{W}^{\sigma,p}(B_\rho)}^p.
\end{equation}
Of course, we only have to show the first inequality, the second inequality is the classical Poincar\'e inequality.

For the proof let us write $f$ instead of $f_k$. Then, since for $x \in T$,
\[
 \lim_{l \to \infty} \mvint_{B_{2^{-l-1} R_k(x)}} f = 0,
\]
we have that
\begin{align*}
 \frac{9}{10} &\leq \sum_{l = 0}^\infty \brac{ \mvint_{B_{2^{-l} R_k}(x)} f - \mvint_{B_{2^{-l-1} R_k(x)}} f}\\
&\leq C\ \sum_{l = 0}^\infty \brac{(2^{-l} R_k)^{-n} \int_{B_{2^{-l} R_k}} |f - (f)_{B_{2^{-l} R_k}}|}.
\end{align*}
Consequently, for any $\eps > 0$, there has to be some $c_\eps > 0$ and some $l \in \N$ such that
\[
\brac{(2^{-l} R_k)^{-n} \int_{B_{2^{-l} R_k}} |f - (f)_{B_{2^{-l} R_k}}|} \geq c_\eps \brac{2^{-l}R_k}^\eps,
\]
because if the opposite inequality was true for all $l \in \N$ we would have
\[
 \frac{9}{10} \leq C\ c_\eps R_k^\eps \sum_{l \in \N} 2^{-\eps l} \leq C\ c_\eps  \sum_{l \in \N} 2^{-\eps l}.
\]
which is false for $c_\eps$ small enough.

Thus, for $\rho := 2^{-l} R_k \in (0,R_k)$,
\[
 \rho^{n-\sigma +\eps} \leq C_\eps \rho^{-\sigma} \int_{B_{\rho}} |f-(f)_{B_\rho}| \leq C_\eps\ \brac{\rho^{-\sigma p} \int_{B_{\rho}} |f-(f)_{B_\rho}|^p}^{\frac{1}{p}} \rho^{n-\frac{n}{p}},
\]
that is
\[
 \rho^{n-\sigma p+\eps p} \leq C_\eps\ \rho^{-\sigma p} \int_{B_{\rho}} |f-(f)_{B_\rho}|^p,
\]
Setting $\eps = \frac{s-(n-\sigma p)}{p} > 0$, we have shown for any $x \in T$ the existence of some $\rho \in (0,R_k)$ satisfying \eqref{eq:nablafpgeqcsrhos}, and the claim is proven.

% We are going to use \eqref{eq:nablafpgeqcsrhos} for some fixed $\tilde{s} \in (n-p,s)$, and find for any $x \in T$ some $\rho_x \in (0,R_k)$ such that
% \begin{equation}\label{eq:nablafpgeqcsrhos2}
%  \int_{B_{\rho}(x)} |\nabla f_k|^p \geq c_{\tilde{s}} \rho^{\tilde{s}}
% \end{equation}

For any $k$ we cover $T_k$ by the family
\[
 \mathcal{F}_k := \{B_\rho(x), \quad x \in T,\ B_\rho(x) \mbox{ satisfies \eqref{eq:nablafpgeqcsrhos}}\}.
\]
Since $T \subset B_2$ is closed and bounded, i.e. compact, we can find a finite subfamily still covering all of $T_k$, and then using Vitali's (finite) covering theorem, we find a subfamily $\tilde{\mathcal{F}}_k \subset \mathcal{F}_k$ of disjoint balls $B_\rho(x)$, so that the union of the $B_{5\rho}$ covers all of $T_k$. We use this $\tilde{\mathcal{F}}_k$ as a cover for an estimate of the Hausdorff measure:
\begin{align*}
\mathcal{H}^s(T_k) &\leq \sum_{B_{\rho} \in \tilde{\mathcal{F}}_k} \mathcal{H}^s(B_{5\rho} \cap T_k)\overset{\eqref{eq:Hsupperbound}}{\leq} \theta\ 5^s\ \sum_{B_{\rho} \in \tilde{\mathcal{F}}_k} \rho^s\\
  &\overset{\eqref{eq:nablafpgeqcsrhos}}\leq C_{\theta,s} \sum_{B_{\rho} \in \tilde{\mathcal{F}}_k} [f_k]_{\dot{W}^{\sigma,p}(B_\rho)}^p \leq C_{\theta,s}\ [f_k]_{\dot{W}^{\sigma,p}(\R^n)}^p 
  \xrightarrow{k \to \infty} 0.
\end{align*}

% 
% For any $\delta > 0$, we can choose $k$ large enough so that $5R_k < \delta$. Let $\tilde{\mathcal{F}}_k$ be the above constructed subfamily of $\mathcal{F}_k$. Then we have
% \[
%  \mathcal{H}^s_{\delta}(T_k) \leq 5^s \sum_{B_\rho \in \tilde{\mathcal{F}}_k} \rho^s \overset{\eqref{eq:nablafpgeqcsrhos2}}{\leq} C\ \delta^{s-\tilde{s}}\sum_{B_\rho \in \tilde{\mathcal{F}}_k} \int_{B_\rho} |\nabla f_k|^p \leq C\ \delta^{s-\tilde{s}}\ \int_{\R^n} |\nabla f_k|^p
% \]
% \ToDo if $\mathcal{H}^s_{\delta}(T_k)$ was uniformly bounded, does this have to converge? \ToDo Density
\end{proof}

\section{An elementary proof for the suboptimal case}
\label{s:easyargs}
We start with the proof of the pointwise inequality, Lemma~\ref{la:pointwisepoinc}.
\begin{proof}
First, let us show the claim for  $p = 1$:

Fix $R,\eps > 0$, $f \in L^1_{loc}$ and assume $x = 0$. W.l.o.g., $f \geq 0$. Set
\begin{equation}\label{eq:tausmall}
 \tau = 2^{-n-1} \brac{\sum_{l=-\infty}^0 2^{\eps l}}^{-1} R^{-\eps},
\end{equation}
and $C_\eps := R^{-\eps} \tau^{-1}$. Assume by contradiction that the claim was false, i.e. assume that for any $\rho \in (0,R)$, 
\begin{equation}\label{eq:assumptionbycontra}
 \mvint_{B_\rho} |f-(f)_{B_\rho}| < \tau\ \rho^\eps\ \mvint_{B_\rho} f.
\end{equation}
Then for any $K \in \N$,
\begin{align*}
 \mvint_{B_\rho} |f-(f)_{B_\rho}| &< \tau\ \rho^\eps\  \sum_{k=-K}^0 \mvint_{B_{2^k\rho}} f - \mvint_{B_{2^{k-1}\rho}} f \quad + \tau \rho^\eps\ \mvint_{B_{2^{-K-1}\rho}} f\\
 &\leq 2^n \tau\ \rho^\eps\ \sum_{k=-K}^0 \mvint_{B_{2^k \rho}} |f-(f)_{B_{2^k \rho}}|  + \tau \rho^\eps\ \mvint_{B_{2^{-K-1}\rho}} f
\end{align*}
Setting now for $l \in \Z$,
\[
 a_l := \mvint_{B_{2^l R}} |f-(f)_{B_{2^l R}}|,
\]
\[
 b_l := \mvint_{B_{2^l R}} f,
\]
the above equation applied to $\rho = 2^l R$ reads as
\[
 a_l \leq 2^n R^\eps\ \tau\ 2^{\eps l}\ \sum_{k=-K}^0 a_{k+l} + \tau\ (2^{l} R)^{\eps}\ b_{-K+l-1} \quad \mbox{for any $K \in \N$, $l \in -\N$}.	
\]
In particular for any $L \in \N$, 
\begin{align*}
 \sum_{l=-L}^0 a_l &\leq 2^n R^\eps\ \tau\ \sum_{l=-L}^0 2^{\eps l}\ \sum_{k=-K}^0 a_{k+l} +  \tau\ R^{\eps}\ \sum_{l=-L}^0 2^{\eps l}\ b_{-K+l-1}\\
&\leq 2^n R^\eps\ \tau\ \sum_{l=-L}^0 2^{\eps l}\ \sum_{k=-K+l}^0 a_{k} +  \tau\ R^{\eps}\ (\sup_{j \leq -K} b_{j})\ \sum_{l=-\infty}^0 2^{\eps l} \\
&\leq 2^n R^\eps\ \tau\ \sum_{k=-L-K}^0 a_{k}\ \sum_{l=-L}^{k+K} 2^{\eps l}  +  \tau\ R^{\eps}\ (\sup_{j \leq -K} b_{j})\ \sum_{l=-\infty}^0 2^{\eps l} \\
&\overset{\eqref{eq:tausmall}}{\leq} \frac{1}{2} \sum_{k=-L-K}^0 a_{k}  +  \frac{1}{2} \sup_{j \leq -K} b_{j}.
 %  &\leq 2^n R^\eps\ \tau\ \sum_{l=-\infty}^0 2^{\eps l}\ \sum_{k=-\infty}^l a_{k}\\
%  &\leq 2^n R^\eps\ \tau\ \sum_{k=-\infty}^\infty a_{k}\ \sum_{l=k}^0 2^{\eps l} \\
%  &\leq 2^n R^\eps\ \tau\ \sum_{l=-\infty}^0 2^{\eps l}\ \sum_{k=-\infty}^\infty a_{k}\\
%  &\overset{\eqref{eq:tausmall}}{\leq} \frac{1}{2} \sum_{k=-\infty}^\infty a_{k}.
\end{align*}
Under the additional assumption that
\begin{equation}\label{eq:finitesum}
 \sum_{l=-\infty}^0 a_l < \infty,
\end{equation}
letting $L,K \to \infty$, using that by \eqref{eq:limtozero} we have $\lim_{l \to \infty} b_l = 0$, the above estimates implies that $a_k = 0$ for all $k \leq 0$. This means that $f$ is a constant on $B_R$, and in particular by \eqref{eq:limtozero}, $f$ is constantly zero in $B_R$. This contradicts the strict inequality \eqref{eq:assumptionbycontra}.

To see \eqref{eq:finitesum}, fix $K \in \N$ such that $\sup_{j \leq -K} b_{j} \leq 2$. Then for
\[
 c_L := \sum_{l=-L}^0 a_l,
\]
the above estimate becomes
\[
 c_{L} \leq \frac{1}{2} c_{L+K} + 1 \quad \mbox{for any $L \in \N$}.
\]
In particular, for any $i \in \N$,
\[
 c_{L+iK} \leq 2^{-i} c_{L} + \sum_{j=0}^i 2^{-j}.
\]
Since $c_{i}$ is monotonically increasing,
\[
 \sup_{i \geq L+K} c_{i} \leq c_{L} + \sum_{j=0}^\infty 2^{-j} < \infty.
\]
This proves Lemma~\ref{la:pointwisepoinc} for $p =1$. 

If $p > 1$, we apply this to $f^p$, and obtain
\begin{equation}\label{eq:firstfpineq}
\int\limits_{B_\rho(x)} f^p \leq C\ \brac{\frac{R}{\rho}}^{\eps} \int\limits_{B_\rho(x)} |f^p-(f^p)_{B_\rho}|.
\end{equation}
We now need the following estimate, which holds for any $p \in [1,\infty)$, and $\delta \in (0,1)$,
\[
 \big ||a-b|^p - |a|^p - |b|^p \big | \leq \delta |a|^p + \frac{C_p}{\delta^p} |b|^p.
\]
Since $B_\rho$ is fixed, let us write $(f)$ for $(f)_{B_\rho}$. Firstly, for any $\delta \in (0,1)$,
\[
 \big |f^p-(f^p) \big | \leq \big |f-(f) \big |^p + \big | (f)^p-(f^p) \big | + \frac{C}{\delta^p} |f-(f)|^p + \delta (f)^p.
\]
Plugging this in \eqref{eq:firstfpineq}, for $\delta = \tilde{\delta} (R/\rho)^{-\eps}$ small enough, we arrive at
\begin{equation}\label{eq:2ndfpineq}
\int\limits_{B_\rho(x)} f^p \leq C\ \brac{\frac{R}{\rho}}^{(1+p)\eps} \int\limits_{B_\rho(x)} |f-(f)|^p + C\ \rho^n \brac{\frac{R}{\rho}}^{(1+p)\eps} \big | (f)^p-(f^p) \big |.
\end{equation}
Next,
\[
 \big | (f)^p-(f^p) \big | \leq \big (| (f)^p-f^p \big |) \leq (|f-(f)|^p) + \delta f^p + \frac{C}{\delta^p} (|f-(f)|^p).
\]
Plugging this now for $\delta = \tilde{\delta} (R/\rho)^{-(1+p)\eps}$ into \eqref{eq:2ndfpineq}, by absorbing we arrive at
\[
\int\limits_{B_\rho(x)} f^p \leq C\ \brac{\frac{R}{\rho}}^{\eps c_p} \int\limits_{B_\rho(x)} |f-(f)|^p.
\]
Since this holds for $\eps > 0$ is arbitrarily small, this proves the Lemma~\ref{la:pointwisepoinc}.
\end{proof}

\begin{corollary}\label{co:subopt}
For $\sigma \in (0,1]$ and for any $f \in W^{\sigma,p}(\Omega)$ satisfying \eqref{eq:negativeintegral}, $\mathcal{H}^t(\Sigma) = 0$, whenever $t > s = \max \{0,n-\sigma\frac{p\alpha}{p+\alpha} \}$.
\end{corollary}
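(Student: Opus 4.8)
The plan is to derive Corollary~\ref{co:subopt} from the pointwise Poincar\'e inequality (Lemma~\ref{la:pointwisepoinc}) together with the negative-integrability hypothesis \eqref{eq:negativeintegral}, using a covering argument to bound $\mathcal{H}^t(\Sigma)$. We may assume $p \le n/\sigma$, since otherwise $s = 0$ and the H\"older embedding from \cite{JiangLin} handles the case directly. Fix $t > s$; we want to show $\mathcal{H}^t(\Sigma) = 0$, and since $\Sigma \subset \Omega$ can be exhausted by compact sets it suffices to prove this for $\Sigma \cap K$ with $K \Subset \Omega$ fixed, so that all the balls we use stay inside $\Omega$ and $f \in W^{\sigma,p}$ on a fixed neighbourhood.

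First I would fix a small $\eps > 0$ (to be chosen at the end depending on $t - s$, $\sigma$, $p$, $\alpha$, $n$) and let $R > 0$ be small. For every $x \in \Sigma$, \eqref{eq:limtozero} holds, so Lemma~\ref{la:pointwisepoinc} provides a radius $\rho = \rho_x \in (0,R)$ with
\[
 \int_{B_{\rho_x}(x)} |f|^p \le C\, \brac{\tfrac{R}{\rho_x}}^{\eps}\int_{B_{\rho_x}(x)} \big||f| - (|f|)_{B_{\rho_x}}\big|^p \le C\, R^\eps\, \rho_x^{-\eps}\, \rho_x^{\sigma p}\, [f]_{\dot W^{\sigma,p}(B_{\rho_x}(x))}^p,
\]
where the last step is the classical (fractional) Poincar\'e inequality on $B_{\rho_x}$. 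On the other hand, since $\int_\Omega |f|^{-\alpha} < \infty$, a reverse-H\"older-type lower bound forces $\int_{B_{\rho_x}(x)}|f|^p$ to be bounded below in terms of $|B_{\rho_x}|$: indeed, by H\"older's inequality with exponents $(p+\alpha)/p$ and $(p+\alpha)/\alpha$,
\[
 |B_{\rho_x}| = \int_{B_{\rho_x}} |f|^{\frac{p\alpha}{p+\alpha}} |f|^{-\frac{p\alpha}{p+\alpha}} \le \brac{\int_{B_{\rho_x}}|f|^p}^{\frac{\alpha}{p+\alpha}} \brac{\int_{B_{\rho_x}}|f|^{-\alpha}}^{\frac{p}{p+\alpha}},
\]
so that $\int_{B_{\rho_x}}|f|^p \ge c\, \rho_x^{\frac{n(p+\alpha)}{\alpha}} \big(\int_{B_{\rho_x}}|f|^{-\alpha}\big)^{-p/\alpha}$. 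Write $\mu(A) := \int_A |f|^{-\alpha}$, a finite Borel measure on $\Omega$. Combining the two inequalities yields, after rearranging,
\[
 \rho_x^{\frac{n(p+\alpha)}{\alpha} - \sigma p + \eps} \le C\, R^\eps\, \mu(B_{\rho_x}(x))^{p/\alpha}\, [f]_{\dot W^{\sigma,p}(B_{\rho_x}(x))}^p,
\]
i.e. $\rho_x^{t'} \le C R^\eps \mu(B_{\rho_x}(x))^{p/\alpha}[f]^p_{\dot W^{\sigma,p}(B_{\rho_x}(x))}$ where $t' := \frac{n(p+\alpha)}{\alpha} - \sigma p + \eps$. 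Note $n \cdot \frac{p+\alpha}{\alpha} - \sigma p = \frac{n(p+\alpha) - \sigma p \alpha}{\alpha}$; one checks $t' = n + (n - s) \cdot \frac{p}{\alpha} + \eps$ when $s = n - \sigma\frac{p\alpha}{p+\alpha}$, which is the correct "super-dimensional" exponent that the covering will redistribute.

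Then I would run a Vitali covering argument: the balls $\{B_{\rho_x}(x)\}_{x \in \Sigma \cap K}$ cover $\Sigma \cap K$, so there is a countable disjoint subfamily $\{B_{\rho_i}(x_i)\}$ with $\Sigma \cap K \subset \bigcup_i B_{5\rho_i}(x_i)$. Then
\[
 \mathcal{H}^t_{10R}(\Sigma \cap K) \le \sum_i (10\rho_i)^t = C\sum_i \rho_i^t,
\]
and I would estimate $\rho_i^t = \rho_i^{t - t'}\cdot \rho_i^{t'} \le R^{\,t - t'}\, \rho_i^{t'} \le C R^{\,t-t'}\,R^\eps\, \mu(B_{\rho_i}(x_i))^{p/\alpha}\,[f]^p_{\dot W^{\sigma,p}(B_{\rho_i}(x_i))}$ provided $t < t'$, which holds as soon as $\eps$ is chosen small (since $t' = n + (n-s)p/\alpha + \eps > t$ for small $\eps$, using $t$ close to $s \le n$; one also arranges $\eps$ small enough that the exponent $\eps c_p$ appearing in Lemma~\ref{la:pointwisepoinc} is still $< t' - t$). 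Now apply H\"older in the sum $i$ with exponents $(p+\alpha)/p$ and $(p+\alpha)/\alpha$:
\[
 \sum_i \mu(B_{\rho_i})^{p/\alpha}\,[f]^p_{\dot W^{\sigma,p}(B_{\rho_i})} \le \brac{\sum_i \mu(B_{\rho_i})}^{p/\alpha \cdot \frac{\alpha}{p+\alpha}\cdot\frac{p+\alpha}{p}}\cdots
\]
— more cleanly, since the balls are disjoint, $\sum_i \mu(B_{\rho_i}) \le \mu(\Omega) < \infty$ and $\sum_i [f]^p_{\dot W^{\sigma,p}(B_{\rho_i})} \le [f]^p_{\dot W^{\sigma,p}(\Omega)} < \infty$ (by subadditivity of the Slobodeckij seminorm over disjoint sets, bounding the double integral over $B_{\rho_i}\times B_{\rho_i}$), so by H\"older $\sum_i \mu(B_{\rho_i})^{p/\alpha}[f]^p_{\dot W^{\sigma,p}(B_{\rho_i})} \le (\mu(\Omega))^{p/\alpha}$ if $p/\alpha \le 1$, or with a direct estimate $\le \mu(\Omega)^{\min(1,p/\alpha)}\,[f]^{p\max(0,\dots)} $ — in any case the key point is that this sum is bounded by a \emph{finite} constant independent of $R$. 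Therefore $\mathcal{H}^t_{10R}(\Sigma\cap K) \le C R^{t - t' + \eps c_p} \to 0$ as $R \to 0$, giving $\mathcal{H}^t(\Sigma \cap K) = 0$, and exhausting $\Omega$ finishes the proof.

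The main obstacle I anticipate is the bookkeeping in combining the two H\"older inequalities — first the reverse-H\"older lower bound on $\int_{B_\rho}|f|^p$ via the negative-power hypothesis, then the H\"older splitting over the disjoint covering family — so that the $\mu$-mass and the $\dot W^{\sigma,p}$-energy each appear to a power $\le 1$ and can be bounded by the global finite quantities $\mu(\Omega)$ and $[f]^p_{\dot W^{\sigma,p}(\Omega)}$; getting the exponents to line up requires tracking how $s = n - \sigma\frac{p\alpha}{p+\alpha}$ arises precisely as the threshold. The other delicate point is choosing $\eps$ last, after $t > s$ is fixed, small enough that both the power $\eps c_p$ from Lemma~\ref{la:pointwisepoinc} and the shift from $t'$ to its $\eps$-perturbed value leave a net positive power of $R$; this is routine once the structure is in place. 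Subadditivity of the fractional seminorm over disjoint balls (as opposed to the full integral over $\R^n$) is an elementary inequality that should be remarked on but presents no real difficulty.
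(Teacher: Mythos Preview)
Your overall strategy matches the paper's: apply Lemma~\ref{la:pointwisepoinc}, combine with the H\"older lower bound coming from $\int |f|^{-\alpha} < \infty$, and run a covering argument. However, there is a genuine error in the step where you pass from $\rho_i^{t'}$ to $\rho_i^t$. You write $\rho_i^{t-t'} \le R^{t-t'}$ ``provided $t < t'$'', but this inequality is reversed: since $\rho_i < R$ and $t - t' < 0$, one has $\rho_i^{t-t'} \ge R^{t-t'}$. Consequently your final bound $\mathcal{H}^t_{10R} \le C R^{t-t'+\eps}$ carries a \emph{negative} exponent of $R$ (note that $t' = n(p+\alpha)/\alpha - \sigma p + \eps$ can be much larger than $n \ge t$; your identification $t' = n + (n-s)\tfrac{p}{\alpha} + \eps$ is also incorrect), so the right-hand side blows up as $R \to 0$ rather than tending to zero.

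The paper sidesteps this by replacing the multiplicative H\"older bound with an \emph{additive} one via Young's inequality: from $c\rho^n \le \big(\int_{B_\rho}|f|^p\big)^{\alpha/(p+\alpha)} \big(\int_{B_\rho}|f|^{-\alpha}\big)^{p/(p+\alpha)}$ one obtains, after weighting by a suitable power of $\rho$,
\[
\rho^{\,s + 2\eps\frac{\alpha}{p+\alpha}} \;\le\; C\,\rho^{2\eps - \sigma p}\!\int_{B_\rho}|f|^p \;+\; C\,\rho^\eps \!\int_{B_\rho}|f|^{-\alpha} \;\le\; C R^{2\eps}\,[f]_{\dot W^{\sigma,p}(B_\rho)}^p \;+\; C R^{\eps}\,\mu(B_\rho),
\]
so that for $t > s$ and $\eps$ small enough, $\rho^t$ itself is bounded by a \emph{sum} of two terms, each carrying a positive power of $R$ and each directly summable over a disjoint (or bounded-overlap) family of balls. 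Your multiplicative route can be repaired the same way, or alternatively by first raising your inequality $\rho^{t'} \le C R^\eps \mu(B_\rho)^{p/\alpha}[f]^p_{B_\rho}$ to the power $\alpha/(p+\alpha)$: the left side becomes $\rho^{\,s + \eps\alpha/(p+\alpha)}$, the right side becomes $C R^{\eps\alpha/(p+\alpha)}\mu(B_\rho)^{p/(p+\alpha)}\big([f]^p_{B_\rho}\big)^{\alpha/(p+\alpha)}$, and now $\rho^t \le \rho^{\,s+\delta}$ holds in the correct direction for $\rho < 1$, while the sum over $i$ is handled cleanly by H\"older with conjugate exponents $(p+\alpha)/p$ and $(p+\alpha)/\alpha$.
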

\begin{proof}
Let $\eps > 0$, $R > 0$, and $x \in \Sigma$. Pick $\rho < R$ from Lemma~\ref{la:pointwisepoinc}, so that
\[
 \int\limits_{B_\rho(x)} |f|^p \leq C\ R^\eps \rho^{\sigma p-\eps}\ [f]_{\dot{W}^{\sigma,p}(B_\rho)}^p.
\]
By H\"older and Young inequality, as in \cite[Corollary 2.1]{JiangLin},
\begin{align*}
 \rho^{n+(2\eps - \sigma p)\frac{\alpha}{p+\alpha}} &\leq C\ \rho^{2\eps - \sigma p}\int\limits_{B_\rho(x)} |f|^p + C\rho^\eps \int\limits_{B_\rho(x)} |f|^{-\alpha}\\
 &\leq C\ R^{2\eps} [f]_{\dot{W}^{\sigma,p}(B_\rho)}^p + C\ R^\eps \int\limits_{B_\rho(x)} |f|^{-\alpha}.
\end{align*}
Let now $\eps > 0$ such that $t > n+(2\eps - \sigma p)\frac{\alpha}{p+\alpha}$, then what we have shown is that for any $R > 0$ and any $x \in \Sigma$ there exists $\rho \in (0,R)$ such that
\begin{equation}\label{eq:theestimate}
  \rho^{t} \leq C\ R^\eps [f]_{\dot{W}^{\sigma,p}(B_\rho)}^p + C \int\limits_{B_\rho(x)} |f|^{-\alpha}.
\end{equation}
Let now 
\[
 \mathcal{V}_R := \{B_\rho(x):\ x \in \Sigma,\ \rho < R, \eqref{eq:theestimate}\mbox{ holds}\}.
\]
Any countable disjoint subclass $\mathcal{U}_R \subset \mathcal{V}_R$ satisfies
\[
 \sum_{B_\rho \subset \mathcal{U}_R} \rho^t \leq C\ R^\eps [f]_{\dot{W}^{\sigma,p}(\Omega)}^p + C R^\eps \int\limits_{\Omega} |f|^{-\alpha}.
\]
By the Besicovitch covering theorem, as in, e.g., \cite[Theorem 18.1]{DBRealAnalysis}, we find for any $R$ a countable subclass $\mathcal{U}_R \subset \mathcal{V}_R$, such that any point of $\Sigma$ is covered at least once, and at most a fixed number of times. Thus, 
\[
 \mathcal{H}^t(\Sigma) = \lim_{R \to 0} \mathcal{H}^t_{R}(\Sigma) \leq C\ \lim_{R \to 0} \sum_{B_\rho \subset \mathcal{U}_R} \rho^t \leq C_f \lim_{R \to 0} R^\eps = 0.
\]
\end{proof}

\bibliographystyle{plain}%
\bibliography{bib}%
\end{document}